\newtheorem{thm}{Theorem}[section]
\newtheorem{cor}[thm]{Corollary}
\newtheorem{lem}[thm]{Lemma}
\newtheorem*{notat}{Notation}
\theoremstyle{definition}
\newcommand{\ran}{\text{\normalfont{ran}}}
\newcommand{\fin}{\text{\normalfont{fin}}}
\newcommand{\Part}{\text{\normalfont{Part}}}
\newcommand{\seq}{\text{\normalfont{seq}}}
\newcommand{\fix}{\text{\normalfont{fix}}}
\newcommand{\sym}{\text{\normalfont{sym}}}
\begin{document}

\title{The finite sequences and the partitions whose members are finite of a set}

\author{Palagorn Phansamdaeng and Pimpen Vejjajiva \bigskip\\\small Department of Mathematics and Computer
Science,\\\small Faculty of Science, Chulalongkorn University}

\date{}

\maketitle

\begin{abstract}
    In this paper, we investigate relationships between $|\seq(A)|$ and $|\Part_{\fin}(A)|$ in the absence of the Axiom of Choice, where  $\seq(A)$ is the set of finite sequences of elements in a set $A$ and  $\Part_{\fin}(A)$ is the set of partitions of $A$ whose members are finite. We show that $|\seq(A)|<|\Part_{\fin}(A)|$ if $A$ is Dedekind-infinite and the condition cannot be removed. Moreover, this relationship holds for an arbitrary infinite set $A$ if we restrict $\seq(A)$ to the set of finite sequences with a bounded length.
\end{abstract}


\section{Introduction}\label{sec1}

 Halbeisen and Shelah showed
in \cite[Theorem 3]{HS1994} that \lq\lq$|\fin(A)|<2^{|A|}$ for any infinite set $A$\rq\rq\, is provable in the Zermelo-Fraenkel set theory (ZF), where $\fin(A)$ is the set of finite subsets of $A$ and $2^{|A|}$ is the cardinality of the power set of $A$. With the Axiom of Choice (AC), $|A|=|\fin(A)|=|\seq(A)|$ for any infinite set $A$, where $\seq(A)$ is the set of finite sequences of elements in $A$ (cf. \cite[Theorem 5.19]{H}). However, in the absence of AC, these equalities cannot be proved (cf. \cite{HS2001}). Unlike the relationship between $|\fin(A)|$ and $2^{|A|}$, \lq\lq$|\seq(A)|\ne 2^{|A|}$ for any set $A$ with $|A|>1$\rq\rq\, is the best possible result in ZF (cf. \cite{HS1994} and \cite{HS2001}).

Dawson and Howard showed in \cite{Daw} that, with AC, $|A|!=2^{|A|}$ for any infinite set $A$, where $|A|!$ is the cardinality of the set of permutations on $A$, but any relationship between $|A|!$ and $2^{|A|}$ for an arbitrary infinite set $A$ cannot be proved in ZF. From \cite{SV2019}, Sonpanow and the second author gave analogous results to those of $|\seq(A)|$ and $2^{|A|}$ by showing that \lq\lq$|\seq(A)|\ne |A|!$ for any nonempty set $A$\rq\rq\, is the best possible result in ZF.

We write $\Part(A)$ for the set of partitions of a set $A$ and $\Part_{\fin}(A)$ for the set of partitions of $A$ whose members are finite. If AC is assumed, then $|\Part_{\fin}(A)|=2^{|A|}=|\Part(A)|$ for all infinite sets $A$ (cf.  \cite[Corollary 3.6]{PV} and \cite[XVII.4 Ex.3]{S}). Without AC, it is not hard to show that $2^{|A|}\leq |\Part(A)|$ for any set $A$ with $|A|\geq 5$. Thus, by the above result, $|\fin(A)|<|\Part(A)|$ for any infinite set $A$.
In \cite{PV}, we gave a stronger result by showing in ZF  that $|\fin(A)|<|\Part_{\fin}(A)|$ for any set $A$ with $|A|\geq 5$ while any relationship between $|\Part_{\fin}(A)|$ and $2^{|A|}$ cannot be concluded for an arbitrary infinite set $A$.

 In this paper, we investigate relationships between $|\seq(A)|$ and  $|\Part_{\fin}(A)|$ for infinite sets $A$. We show in ZF that $|\seq(A)|<|\Part_{\fin}(A)|$ if $A$ is Dedekind-infinite and the condition cannot be removed. We also show that this relationship can be obtained for an arbitrary infinite set $A$ if $\seq(A)$ in the statement is restricted to the set of finite sequences with bounded lengths. That is we can show in ZF that $|\seq_{\leq n}(A)|<|\Part_{\fin}(A)|$ for any infinite set $A$ and any natural number $n$, where $\seq_{\leq n}(A)$ is the set of sequences of elements in $A$ with length less than or equal to $n$.

\section{RESULTS IN ZF}\label{sec2}

In this section, we shall work in ZF. For any sets $A$ and $B$, we say that $|A|=|B|$ if there is an explicit bijection between $A$ and $B$,  $|A|\leq|B|$ if there is an explicit injection from $A$ into $B$, and  $|A|<|B|$ if $|A|\leq|B|$ but $|A|\neq|B|$.
A set $A$ is \emph{Dedekind infinite} if $\aleph_0\leq |A|$, otherwise $A$ is \emph{Dedekind finite}.

Apart from the notation introduced in the previous section, we list some that will be used in our work below.

\begin{notat}
\begin{enumerate}
\item  For any set $A$, $\seq^{1-1}(A)$ denotes the set of injective finite sequences of elements in $A$.
\item For a partition $\Pi$ of a set, let $[a]_{\Pi}$ denote the element of $\Pi$ containing $a$.

\item For a sequence $a=\langle a_0,a_1,\dots,a_{n}\rangle$, let $\underline{a}$ denote $\{a_0,a_1,\dots,a_{n}\}$.
\end{enumerate}
\end{notat}

\bigskip

We start with the relationship between $|\seq^{1-1}(A)|$ and $|\Part(A)|$ when $A$ is a finite set.

\begin{thm}
    \label{An>Bn}
For any natural number $n\neq 0$, $|\seq^{1-1}(n)|>|\Part(n)|$.
\begin{proof}
    We proceed by induction on $n$. Suppose the result holds for all $k\leq n$. By recurrences of arrangement number and Bell number,
\begin{align*}
|\seq^{1-1}(n+1)|&=(n+1)|\seq^{1-1}(n)|+1\\&>(n+1)|\Part(n)|\\
&\geq\sum_{k\leq n}\binom{n}{k}|\Part(k)|=|\Part(n+1)|.\qedhere
\end{align*}
\end{proof}
\end{thm}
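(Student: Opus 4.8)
The plan is to recognize both sides as classical counting sequences and run an induction driven by their recurrences. First I would identify $|\sqn(n)|$ with the arrangement number $a_n=\sum_{k=0}^{n}\frac{n!}{(n-k)!}$, since an injective sequence of elements of $\{0,\dots,n-1\}$ is determined by a choice of length $k$ together with an ordered selection of $k$ distinct elements, and I would identify $|\Part(n)|$ with the Bell number $B_n$. The two facts I need about these numbers are the recurrences $a_{n+1}=(n+1)a_n+1$ and $B_{n+1}=\sum_{k=0}^{n}\binom{n}{k}B_k$. The first follows by splitting the injective sequences on $\{0,\dots,n\}$ according to whether they are empty (exactly one such) or not, where deleting the first entry of a nonempty sequence recovers an injective sequence on the remaining $n$ points after recording which of the $n+1$ values was deleted. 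The second is the standard Bell recurrence obtained by classifying a partition of $\{0,\dots,n\}$ according to the block containing the point $n$.

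With these in hand I would argue by induction on $n$. The base case $n=1$ is immediate, since $a_1=2>1=B_1$. For the step, assuming $|\sqn(k)|>|\Part(k)|$ for all $k\leq n$, the arrangement recurrence gives
\[
|\sqn(n+1)|=(n+1)|\sqn(n)|+1>(n+1)|\Part(n)|,
\]
using the hypothesis at $k=n$. It then remains to compare $(n+1)B_n$ with $B_{n+1}=\sum_{k\leq n}\binom{n}{k}B_k$, so the whole argument reduces to the single inequality $(n+1)B_n\geq B_{n+1}$.

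The main obstacle is precisely this inequality, which does not follow from the crude estimate $\sum_{k\leq n}\binom{n}{k}B_k\leq 2^nB_n$. I would instead prove it by subtracting the two Bell recurrences: from $B_{n+1}=\sum_{k=0}^{n}\binom{n}{k}B_k$ and $B_n=\sum_{k=0}^{n-1}\binom{n-1}{k}B_k$ one obtains $B_{n+1}-B_n=\sum_{k=0}^{n-1}\binom{n}{k}B_k$. Writing $\binom{n}{k}=\frac{n}{n-k}\binom{n-1}{k}$ and using $\frac{1}{n-k}\leq 1$ for $0\leq k\leq n-1$ then yields
\[
B_{n+1}-B_n=\sum_{k=0}^{n-1}\frac{n}{n-k}\binom{n-1}{k}B_k\leq n\sum_{k=0}^{n-1}\binom{n-1}{k}B_k=nB_n,
\]
so $B_{n+1}\leq(n+1)B_n$, as required. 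Chaining the three displayed relations closes the induction. I would remark that the $+1$ in the arrangement recurrence is exactly what keeps the final comparison strict: it upgrades the merely inductive bound $|\sqn(n)|>|\Part(n)|$ into a strict inequality at stage $n+1$, whereas without it one would recover only $\geq$.
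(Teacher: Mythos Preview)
Your argument is correct and follows exactly the paper's approach: induction via the arrangement recurrence $a_{n+1}=(n+1)a_n+1$ and the Bell recurrence $B_{n+1}=\sum_{k\le n}\binom{n}{k}B_k$, with the key comparison $(n+1)B_n\ge B_{n+1}$. The paper simply asserts this last inequality, whereas you supply a clean proof of it; your closing remark about the role of the ``$+1$'' is inessential (strictness already comes from $(n+1)a_n>(n+1)B_n$ since $a_n>B_n$ are integers), but it does no harm.
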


\begin{cor}\label{A>B}
For any nonempty finite set $A$, $|\seq^{1-1}(A)|>|\Part(A)|$.
\end{cor}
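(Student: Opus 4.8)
The plan is to reduce the statement for an arbitrary nonempty finite set $A$ to the special case already settled in Theorem~\ref{An>Bn}, where the underlying set is a natural number $n$. Since $A$ is a nonempty finite set, there is a natural number $n\neq 0$ with $|A|=n$, witnessed by an explicit bijection $f\colon n\to A$. The goal is to use $f$ to transport the inequality $|\seq^{1-1}(n)|>|\Part(n)|$ over to $A$.

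First I would show that $f$ induces a bijection on injective finite sequences. Given $s=\langle s_0,\dots,s_{k-1}\rangle\in\seq^{1-1}(n)$, the componentwise image $\langle f(s_0),\dots,f(s_{k-1})\rangle$ is again injective because $f$ is, and this assignment is inverted by composing with $f^{-1}$; hence $|\seq^{1-1}(A)|=|\seq^{1-1}(n)|$. Next I would show that $f$ induces a bijection on partitions: for a partition $\Pi$ of $n$, the family $\{f[B]:B\in\Pi\}$ is a partition of $A$, since a bijection preserves nonemptiness, disjointness, and covering of the blocks, and the map is inverted by applying $f^{-1}$ blockwise; hence $|\Part(A)|=|\Part(n)|$.

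Finally I would combine these two equalities with Theorem~\ref{An>Bn}. Because all the sets involved are finite, each cardinality is a genuine natural number and ``$<$'' is just the usual order on the naturals; the two equalities then allow me to substitute $|\seq^{1-1}(A)|$ for $|\seq^{1-1}(n)|$ and $|\Part(A)|$ for $|\Part(n)|$ in $|\seq^{1-1}(n)|>|\Part(n)|$, which yields $|\seq^{1-1}(A)|>|\Part(A)|$.

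There is no genuine obstacle here: the entire content lies in Theorem~\ref{An>Bn}, and the corollary is a routine transfer along a bijection. The only points needing a line of care are verifying that the induced partition map actually lands in $\Part(A)$ and is inverted by $f^{-1}$, and noting that the strict inequality carries through equalities of cardinalities within the explicit-map framework fixed at the start of Section~\ref{sec2}.
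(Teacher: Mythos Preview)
Your proposal is correct and is exactly the routine reduction the paper has in mind: the corollary is stated without proof immediately after Theorem~\ref{An>Bn}, so the intended argument is precisely the transfer along a bijection $f\colon n\to A$ that you describe. There is nothing to add.
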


For infinite sets, things turn out to be different. The facts below together with Cantor's theorem show that  $|\seq(A)|<|\Part_{\fin}(A)|$ for all infinite well-ordered sets $A$.

\begin{thm}\cite[Theorem 5.19]{H}
\label{seq}
    For any infinite ordinal $\alpha$, $|\alpha|=|\seq(\alpha)|$.
\end{thm}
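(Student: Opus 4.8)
The plan is to prove the two inequalities $|\alpha|\le|\seq(\alpha)|$ and $|\seq(\alpha)|\le|\alpha|$ separately and then invoke the Cantor--Schr\"oder--Bernstein theorem, which is available in ZF. The first inequality is immediate: the map $\beta\mapsto\langle\beta\rangle$ sending each element of $\alpha$ to the corresponding length-one sequence is an explicit injection of $\alpha$ into $\seq(\alpha)$. All the work therefore goes into producing an explicit injection from $\seq(\alpha)$ into $\alpha$, and the point is that, since $\alpha$ is an ordinal, everything can be carried out canonically, with no appeal to choice.

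The engine of the argument is the classical fact that $|\alpha\times\alpha|=|\alpha|$ for every infinite ordinal $\alpha$. I would establish this through the G\"odel pairing function, that is, the order isomorphism $\Gamma\colon\mathrm{Ord}\times\mathrm{Ord}\to\mathrm{Ord}$ induced by the canonical well-ordering of pairs that compares first by the maximum of the two coordinates and then lexicographically. A transfinite induction (Hessenberg's theorem) shows that $\Gamma$ restricts to a bijection on $\kappa\times\kappa$ for each infinite cardinal $\kappa$, whence $|\alpha\times\alpha|=|\alpha|$ for all infinite $\alpha$; crucially, $\Gamma$ is definable without choices, so the resulting bijection is explicit. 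Iterating, for each fixed $n\ge 1$ one obtains by composition an explicit bijection $f_n\colon\alpha^n\to\alpha$ (with $f_1=\mathrm{id}$ and $f_{n+1}$ built from $f_n$ and $\Gamma\restriction\alpha\times\alpha$).

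With the $f_n$ in hand I would assemble the desired injection as follows. Writing each nonempty finite sequence $s$ with its length $\mathrm{len}(s)=n$, send $s\mapsto\langle n,f_n(s)\rangle\in\omega\times\alpha$ and send the empty sequence to a reserved value; this is injective because the first coordinate recovers the length and $f_n$ is a bijection on each $\alpha^n$. Since $\alpha$ is infinite we have $\omega\subseteq\alpha$, so $|\omega\times\alpha|\le|\alpha\times\alpha|=|\alpha|$, and composing gives an explicit injection $\seq(\alpha)\to\alpha$. Together with the first inequality and Cantor--Schr\"oder--Bernstein, whose standard proof turns the two explicit injections into an explicit bijection, this yields $|\seq(\alpha)|=|\alpha|$. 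The only genuinely substantial step is the middle one, $|\alpha\times\alpha|=|\alpha|$: everything else is bookkeeping, but that identity rests on the nontrivial transfinite induction verifying that the G\"odel ordering of $\kappa\times\kappa$ has order type exactly $\kappa$.
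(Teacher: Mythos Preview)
Your argument is correct and is essentially the standard one: the easy inclusion via length-one sequences, then Hessenberg's theorem $|\kappa\times\kappa|=|\kappa|$ via the canonical G\"odel ordering to collapse each $\alpha^n$ into $\alpha$, bundle the lengths through $\omega\times\alpha$, and finish with Cantor--Bernstein. The paper itself does not supply a proof of this theorem at all---it is merely quoted from Halbeisen's book \cite[Theorem~5.19]{H} as a background fact---so there is no in-paper argument to compare against; your write-up is precisely the kind of proof one would find there.
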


\begin{thm}\cite[Theorem 3.5]{PV}
\label{power}
    For any infinite ordinal $\alpha$, $|\Part_{\fin}(\alpha)|=2^{|\alpha|}$.
\end{thm}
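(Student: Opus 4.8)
The plan is to establish the two inequalities $|\Part_{\fin}(\alpha)|\le 2^{|\alpha|}$ and $2^{|\alpha|}\le|\Part_{\fin}(\alpha)|$ separately and then invoke the Cantor--Schr\"oder--Bernstein theorem, which holds in ZF, to conclude equality. The crucial arithmetic input, valid for every infinite ordinal without any use of choice, is the canonical definable bijection $\alpha\leftrightarrow\alpha\times\alpha$ furnished by the G\"odel pairing function. This gives $|\alpha\times\alpha|=|\alpha|$, and it also yields a definable bijection $\alpha\leftrightarrow\alpha\times 2$ and hence an explicit splitting $\alpha=A_0\sqcup A_1$ into two disjoint copies of $\alpha$, which is what drives the lower bound.

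For the upper bound I would identify each partition $\Pi\in\Part_{\fin}(\alpha)$ with its associated equivalence relation $R_\Pi=\{(a,b): [a]_\Pi=[b]_\Pi\}\subseteq\alpha\times\alpha$. The assignment $\Pi\mapsto R_\Pi$ is an explicit injection of $\Part_{\fin}(\alpha)$ (indeed of all of $\Part(\alpha)$) into $\mathcal P(\alpha\times\alpha)$, since a partition is recovered from its equivalence classes, so $|\Part_{\fin}(\alpha)|\le 2^{|\alpha\times\alpha|}$. Composing with the G\"odel pairing bijection gives $2^{|\alpha\times\alpha|}=2^{|\alpha|}$, whence $|\Part_{\fin}(\alpha)|\le 2^{|\alpha|}$.

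For the lower bound I would fix, via the pairing bijection, the splitting $\alpha=A_0\sqcup A_1$ together with bijections $f_0\colon\alpha\to A_0$ and $f_1\colon\alpha\to A_1$. To each $X\subseteq\alpha$ I would assign the partition
\[
\Phi(X)=\{\{f_0(\xi),f_1(\xi)\}:\xi\in X\}\cup\{\{f_0(\xi)\}:\xi\notin X\}\cup\{\{f_1(\xi)\}:\xi\notin X\}.
\]
Every block is a singleton or a doubleton, so $\Phi(X)\in\Part_{\fin}(\alpha)$, and since the $f_i$ have disjoint ranges whose union is $\alpha$, this is a genuine partition. The map $\Phi$ is injective because from $\Phi(X)$ one recovers $X$ as exactly the set of $\xi$ for which $f_0(\xi)$ and $f_1(\xi)$ lie in a common block. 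This establishes $2^{|\alpha|}\le|\Part_{\fin}(\alpha)|$, and combining with the previous paragraph completes the proof.

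The routine verifications (that $R_\Pi$ determines and is determined by $\Pi$, that $\Phi$ lands in $\Part_{\fin}(\alpha)$, and that $\Phi$ is injective) are straightforward, so I do not expect a genuine obstacle in the ordinal case. The only point demanding care is that every bijection used be \emph{explicit}, since the whole development is choice-free; this is precisely why the argument is confined to ordinals, where the G\"odel pairing supplies a definable $\alpha\leftrightarrow\alpha\times\alpha$. For an arbitrary infinite set no such canonical square bijection need exist in ZF, which is exactly the phenomenon the later sections exploit when they restrict attention to Dedekind-infinite sets or to bounded-length sequences.
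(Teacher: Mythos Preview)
Your argument is correct: the upper bound via the equivalence-relation encoding into $\mathcal P(\alpha\times\alpha)$ combined with G\"odel pairing, and the lower bound via an explicit splitting $\alpha=A_0\sqcup A_1$ and the doubleton/singleton coding of subsets, are both sound and choice-free, and Cantor--Bernstein finishes the job.

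There is nothing to compare against here, however: the paper does not prove this theorem at all but simply quotes it as \cite[Theorem~3.5]{PV}. So your write-up is not a reconstruction of the paper's proof but rather a self-contained proof of a result the authors import from their earlier work. What you have written is essentially the standard argument one would expect in that reference (and indeed the only real idea is the availability of a definable bijection $\alpha\leftrightarrow\alpha\times 2$, which you correctly isolate as the reason the result is stated for ordinals rather than arbitrary infinite sets).
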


 Since AC is equivalent to \lq\lq every set can be well-ordered\rq\rq, we obtain that $|\seq(A)|<|\Part_{\fin}(A)|$ for all infinite sets $A$ if AC is assumed.

What happens in the absence of AC?

We shall show that the relationship holds for Dedekind infinite sets and shall show later in the next section that this assumption cannot be removed. First, we need the following lemma.

\begin{lem}
\label{COR}
For any sets $A$ and $B\subseteq A$, if $|A|=2^{|\alpha|}$ and $|B|\leq|\alpha|$, where $\alpha$ is an infinite ordinal, then $|A|=|A\setminus B|$.
\end{lem}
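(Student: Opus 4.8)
The plan is to reduce the statement to a removal problem on a power set and then invoke the Cantor--Schr\"oder--Bernstein theorem, whose ZF proof manufactures an explicit bijection out of two explicit injections. Write $\kappa=|\alpha|$, an infinite initial ordinal. From $|A|=2^{|\alpha|}$ together with $|\alpha|=\kappa$ I first fix an explicit bijection $F:A\to\mathcal P(\kappa)$, and set $C=F[B]$, so that $F$ carries $A\setminus B$ onto $\mathcal P(\kappa)\setminus C$ and $|C|=|B|\le\kappa$; fix also an explicit injection $h:C\to\kappa$. It therefore suffices to prove $|\mathcal P(\kappa)|=|\mathcal P(\kappa)\setminus C|$. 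Since the inclusion already gives $|\mathcal P(\kappa)\setminus C|\le|\mathcal P(\kappa)|$, the entire problem comes down to constructing one explicit injection $\mathcal P(\kappa)\to\mathcal P(\kappa)\setminus C$.

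For the construction I would exploit a product decomposition. Using $\kappa+\kappa=\kappa$ with its canonical witnessing bijection (valid in ZF for well-ordered cardinals), fix a partition $\kappa=K_0\sqcup K_1$ together with explicit bijections $b_0:\kappa\to K_0$ and $b_1:\kappa\to K_1$. These induce an explicit bijection $G:\mathcal P(\kappa)\to\mathcal P(\kappa)\times\mathcal P(\kappa)$ sending $X$ to $(b_0^{-1}[X\cap K_0],\,b_1^{-1}[X\cap K_1])$; this is exactly the identity $2^\kappa\cdot 2^\kappa=2^\kappa$ made explicit. Put $C'=G[C]$, a set of size $\le\kappa$, and let $D$ be the projection of $C'$ to the first coordinate. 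The key geometric point is that if I can exhibit a single $Y_0\in\mathcal P(\kappa)\setminus D$, then the whole ``row'' $\{Y_0\}\times\mathcal P(\kappa)$ is disjoint from $C'$; composing $G^{-1}$ with the map $Z\mapsto(Y_0,Z)$ and transporting back through $G$ then yields an explicit injection of $\mathcal P(\kappa)$ into $\mathcal P(\kappa)\setminus C$, which is all that remains.

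The delicate part, and the only place where the absence of AC bites, is producing $Y_0$ explicitly. We have $|D|\le|C'|\le\kappa$, but since $D$ is merely a surjective image of $C'$ I must make this bound effective: because $C'$ is well-orderable (it injects into $\kappa$ via $h$ and $G$), I can well-order it and send each $d\in D$ to the value, under that injection, of the $\le$-least element of $C'$ projecting to $d$, giving an explicit injection $j:D\to\kappa$. Finally I diagonalize, declaring $\xi\in Y_0$ exactly when either $\xi\notin\ran(j)$, or $\xi\in\ran(j)$ and $\xi\notin j^{-1}(\xi)$. For any $d\in D$ the sets $Y_0$ and $d$ then disagree at $\xi=j(d)$, so $Y_0\notin D$, and $Y_0$ has been pinned down by an explicit formula. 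This constructive Cantor diagonalization is the heart of the argument; everything else is routine ZF cardinal arithmetic together with the constructive form of Cantor--Schr\"oder--Bernstein.
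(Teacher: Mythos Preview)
Your argument is correct. Both proofs rest on the same ZF ingredients---the canonical bijection witnessing $\kappa+\kappa=\kappa$ for infinite alephs, a Cantor-style diagonalization, and Cantor--Bernstein---but they deploy them differently. The paper first treats the special case $|B|=|\alpha|$: it lists $f_A[C]$ as the first $\alpha$ terms of a transfinite sequence indexed by $\alpha+\alpha$ and then diagonalizes recursively along the second copy of $\alpha$ to produce $\alpha$-many fresh members of $\mathcal P(\alpha)\setminus f_A[C]$, obtaining an injection $C\to A\setminus C$ and finishing with aleph-absorption $|A\setminus C|+|C|=|A\setminus C|$; the general case $|B|\le|\alpha|$ is then handled by enlarging $B$ to a set $B'$ of size exactly $|\alpha|$. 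Your route avoids both the transfinite recursion and the reduction step: you use $\kappa+\kappa=\kappa$ instead to realise $\mathcal P(\kappa)\cong\mathcal P(\kappa)\times\mathcal P(\kappa)$, perform a \emph{single} diagonalization against the (well-orderable) first projection of $C$ to locate one point $Y_0$, and then the whole row $\{Y_0\}\times\mathcal P(\kappa)$ supplies the desired injection at once. Your version is shorter and handles $|B|\le|\alpha|$ uniformly; the paper's iterative construction is closer in spirit to the Halbeisen--Shelah style of building long injective sequences and incidentally yields an explicit injection $\alpha\to\mathcal P(\alpha)\setminus f_A[C]$, which can be useful in related arguments.
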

\begin{proof}
Let $A$ be a set such that $|A|=2^{|\alpha|}$, where $\alpha$ is an infinite ordinal. Since $|\alpha|<2^{|\alpha|}=|A|$, there is $C\subseteq A$ such that $|C|=|\alpha|$. First, we show that $|A|=|A\setminus C|$.

Let $f_A\colon A\rightarrow\mathcal{P}(\alpha)$ and $f_{C}\colon C\rightarrow\alpha$ be bijections and fix a canonical bijection $F\colon \alpha\rightarrow\alpha+\alpha$.

    Define $g\colon \alpha+\alpha\rightarrow\mathcal{P}(\alpha)$ recursively as follows:
    \begin{center}
    $g(\beta)=\begin{cases}f_A\circ f^{-1}_{C}(\beta)&\text{if } \beta<\alpha,\\
    \{\xi<\alpha:F(\xi)<\beta\rightarrow\xi\not\in (g\upharpoonright\beta\circ F)(\xi)\}&\text{otherwise.}
    \end{cases}$
    \end{center}

Clearly, $g\upharpoonright\alpha$ is injective and for each $\alpha\leq\beta<\alpha+\alpha$, if $g(\beta)=(g\upharpoonright\beta\circ F)(\xi)$ for some $\xi<\alpha$, then $F(\xi)<\beta$ and so $\xi\in g(\beta)=(g\upharpoonright\beta\circ F)(\xi)\leftrightarrow \xi\not\in g(\beta)$, a contradiction. Hence $g(\beta)\not\in \ran(g\upharpoonright\beta\circ F)=\ran(g\upharpoonright\beta)$ for all $\alpha\leq\beta<\alpha+\alpha$. Thus $g$ is injective.

Define $G\colon \alpha\rightarrow\mathcal{P}(\alpha)$  by $G(\beta)=g(\alpha+\beta)$ for each $\beta<\alpha$. Clearly, $G$ is injective.
Since $\ran(g\restriction_\alpha)=f_A\circ f_{C}^{-1}[\alpha]=f_A[C]$, $\ran(G)\subseteq\mathcal{P}(\alpha)\setminus f_A[C]$. Hence $f_A^{-1}[\ran(G)]\subseteq A\setminus C$, so
 $f_A^{-1}\circ G \circ f_{C}\colon C\rightarrow A\setminus C$ is injective. Since $|C|$ is an aleph and $|C|\leq|A\setminus C|$, $|A|=|A\setminus C|+|C|=|A\setminus C|$.

Now, let $B\subseteq A$ such that $|B|\leq|\alpha|$. Let $B'= B\cup f_A^{-1}[\alpha]$. Since $|B|\leq|\alpha|=|f_A^{-1}[\alpha]|$, $|B'|=|\alpha|$. Hence, by the above result, $|A|=|A\setminus B'|\leq |A\setminus B|\leq|A|$, and so $|A|=|A\setminus B|$ by the Cantor-Bernstein theorem.
\end{proof}

\begin{thm}
\label{<}
For any Dedekind infinite set $A$, $|\seq (A)|<|\Part_{\fin}(A)|$.
\begin{proof}
Let $A$ be a Dedekind infinite set. Then $|A|= |A\cup\omega|$. We may assume that $A\cap\omega=\emptyset$.
First, we show that $|\seq(A)|\leq|\Part_{\fin} (A\cup\omega)|$.

Define $f\colon \seq (A)\rightarrow\Part_{\fin}(A\cup\omega)$ by stipulating
\[a=\langle a_0,a_1,\dots,a_{n-1}\rangle\mapsto\{\{a_i\}\cup\{j: a_i=a_j\}:i< n\}\cup[A\setminus\underline{a}]^1\cup[\omega\setminus n]^1.\]

To show that $f$ is injective, let $a=\langle a_0,a_1,\dots,a_{m-1}\rangle$, $b=\langle b_0,b_1,\dots,b_{n-1}\rangle\in \seq (A)$ such that $a\neq b$.
If $m=n$, then there is $i<n$ such that $a_i\ne b_i$, so $a_i\in[i]_{f(a)}\setminus[i]_{f(b)}$.
 Otherwise, if $m<n$, we have $\{m\}\in f(a)\setminus f(b)$.  Hence $f(a)\ne f(b)$ and so $f$ is injective. Thus $|\seq (A)|\leq|\Part_{\fin} (A\cup\omega)|=|\Part_{\fin}(A)|$.

To show that $|\seq (A)|<|\Part_{\fin}(A\cup\omega)|$, suppose to the contrary that there is an injection $g\colon \Part_{\fin}(A\cup\omega)\rightarrow\seq(A)$. We shall construct a one-to-one $\alpha$-sequence of elements in $\seq(A)$ for every ordinal $\alpha$. This contradicts Hartogs' theorem.

   First, we construct a one-to-one $\omega$-sequence of elements in $\seq(A)$ recursively by
\begin{align*}
    s_0&=g([A]^1\cup\{\{2i,2i+1\}:i<\omega\}),\\ s_{n+1}&=g(f(s_n)).
    \end{align*}
    Since $[A]^1\cup\{\{2i,2i+1\}:i<\omega\}\not\in\ran(f)$ where $f$ and $g$ are injective, $\langle s_0,s_1,\dots\rangle_\omega$ is  one-to-one.

    Suppose we have already constructed a one-to-one $\alpha$-sequence $\langle s_0,s_1,\dots\rangle_\alpha$ of elements in $\seq(A)$ for an ordinal $\alpha\geq\omega$. We shall define $s_\alpha\in \seq(A)$ which is distinct from $s_i$ for all $i<\alpha$.

     Let $S=\bigcup_{\iota<\alpha}\underline{s_{\iota}}$.  If $S$ is finite, then let $a$ be the first member of a fixed denumerable subset of $A$ which is not in $S$ and let $s_{\alpha}=\langle a\rangle$.   Suppose $S$ is infinite. Define a well-ordering $<_S$ on $S$ by \[x<_{S}y\text{ if and only if }i_x<i_y\text{ or }(i_x=i_y\text{ and }j_x<j_y),\] where  $i_z$ is the first $i$ such that $z\in\underline{s_{i}}$ and $j_z$ is the first $j$ such that $s_{i_z}(j)=z$ for $z\in S$.

    Define an injection $F\colon \Part_{\fin}(S)\rightarrow\seq(A)$ by
\[F(\Pi)=g(\Pi\cup[(A\cup\omega)\setminus S]^1).\]
    Let $Y=\Part_{\fin}(S)\setminus F^{-1}[\{s_{\iota}:\iota<\alpha\}]$.  Since $|F^{-1}[\{s_{\iota}:\iota<\alpha\}]|\leq|\{s_{\iota}:\iota<\alpha\}|=|\alpha|\leq|\seq(S)|=|S|$ and $|\Part_{\fin}(S)|=2^{|S|}$, by Lemma \ref{COR}, $|Y|=|\Part_{\fin}(S)|$. Let $p\colon \Part_{\fin}(S)\rightarrow Y$ be the canonical bijection. Then $F\circ p\colon \Part_{\fin}(S)\to\seq(A)\setminus\{s_{\iota}:\iota<\alpha\}$ is injective. Let $s_{\alpha}=F\circ p([S]^1)$. Then $s_\alpha\not\in\{s_{\iota}:\iota<\alpha\}$.

    We can see that the sequence constructed by the above process is an extension of the sequences previously constructed. Thus we can construct a one-to-one sequence whose length is a limit ordinal as the union of the sequences constructed earlier.
\end{proof}
\end{thm}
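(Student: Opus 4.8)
The plan is to establish the two halves of the strict inequality separately: an explicit injection $\seq(A)\hookrightarrow\Part_{\fin}(A)$ for the $\leq$ part, and a contradiction argument ruling out any injection in the reverse direction for the $\neq$ part. The Dedekind infiniteness of $A$ enters only through the identity $|A|=|A\cup\omega|$, so after fixing a copy of $\omega$ disjoint from $A$ I would work throughout with $A\cup\omega$ and its natural-number ``coordinate'', which supplies room to record bookkeeping data that a bare partition of $A$ cannot otherwise carry.

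For the first half I would build an injection $f\colon\seq(A)\to\Part_{\fin}(A\cup\omega)$. The idea is that a finite sequence carries two pieces of information a partition does not see directly: its length and its pattern of repetitions. Given $a=\langle a_0,\dots,a_{n-1}\rangle$ I would form the partition whose nonsingleton blocks glue each value occurring in $a$ to the set of indices below $n$ at which that value occurs, leaving all other points (the unused elements of $A$ and the tail $\omega\setminus n$) as singletons. Each block is finite because the sequence is, so this lands in $\Part_{\fin}$. Injectivity is recoverability: the naturals below $n$ are exactly those lying in a nonsingleton block, so $n$, and then each $a_i$, can be read back off the partition. Transporting along $|A\cup\omega|=|A|$ yields $|\seq(A)|\leq|\Part_{\fin}(A)|$.

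The substantive half is strictness. I would argue by contradiction: assuming an injection $g\colon\Part_{\fin}(A\cup\omega)\to\seq(A)$, I would manufacture a one-to-one $\alpha$-indexed sequence of elements of $\seq(A)$ for \emph{every} ordinal $\alpha$, contradicting Hartogs' theorem. The seed is an $\omega$-chain: start from $g$ applied to some partition demonstrably outside $\ran(f)$ (e.g.\ one with no singleton naturals at all), then iterate $s_{n+1}=g(f(s_n))$; injectivity of $f$ and $g$ together with the choice of seed force the $s_n$ to be distinct. The transfinite step is where the work lies. Given a one-to-one $\langle s_\iota\rangle_{\iota<\alpha}$ I must produce a fresh $s_\alpha$. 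Let $S=\bigcup_{\iota<\alpha}\underline{s_\iota}$ collect every letter used so far. If $S$ is finite, one new letter from a fixed denumerable subset of $A$ gives a sequence no previous term can equal; otherwise I would exploit that $\Part_{\fin}(S)$ is as large as $2^{|S|}$.

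The main obstacle is the infinite-$S$ case, and both its difficulties are about the absence of choice. First, $S$ is a union of $\alpha$-many finite sets, so a priori it need not be well-orderable; I would circumvent this by well-ordering $S$ \emph{explicitly} from the sequence itself, ranking each letter by the least index $\iota$ at which it appears and then by its position within $s_\iota$. This makes $|S|$ an aleph, so Theorem \ref{power} gives $|\Part_{\fin}(S)|=2^{|S|}$ and Theorem \ref{seq} gives $|\seq(S)|=|S|$. Second, I must know that pushing partitions of $S$ (padded by singletons on $(A\cup\omega)\setminus S$) forward through $g$ still leaves a value outside $\{s_\iota:\iota<\alpha\}$, even after discarding the at most $|\alpha|\leq|\seq(S)|=|S|$ terms already named. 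This is precisely Lemma \ref{COR}: deleting a subset of size $\leq|S|$ from a set of size $2^{|S|}$ changes nothing, so the surviving partitions still inject into $\seq(A)$ while avoiding the earlier terms, and a canonical one of them serves as $s_\alpha$. Finally I would verify that successive stages cohere (each extends its predecessors), so that limit stages are handled by unions, and the resulting arbitrarily long one-to-one sequences into $\seq(A)$ violate Hartogs' theorem.
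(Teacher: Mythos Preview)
Your proposal is correct and follows the paper's proof essentially step for step: the same injection $f$ encoding each value together with its set of occurrence indices, the same Hartogs-type contradiction seeded by a partition outside $\ran(f)$ and iterated via $g\circ f$, the same explicit well-ordering of $S$ by first appearance and position, and the same appeal to Theorems~\ref{seq} and~\ref{power} together with Lemma~\ref{COR} to produce a fresh $s_\alpha$ at the successor step. There is no substantive difference between your argument and the paper's.
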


Next, we shall show that if we restrict the set of all finite sequences of a set to the set of all finite sequences with bounded lengths, then the relationship holds for arbitrary infinite sets.

\begin{thm}
\label{nleq}
    For any infinite set $A$ and any natural number $n$,

    $|\seq_{\leq n}(A)|<|\Part_{\fin}(A)|$.
\begin{proof} Let $A$ be an infinite set and $n$ be a natural number.
    First, we construct an injection from $\seq_{\leq n}(A)$ to $\Part_{\fin}(A)$.

    Pick $(n+2)^2$ elements in $A$, say $a^{j}_{i}$ where $i,j\leq n+1$ and let $A_j=\{a^j_i:i\leq n+1\}$ for all $j\leq n+1$. For each $c=\langle c_0,c_1,\dots,c_{k-1}\rangle\in\seq_{\leq n}(A)$, let $j_c$ be the first $j$ such that $\underline{c}\cap A_{j}=\emptyset$  and let \[\pi_c=\{[c_{\iota}]:\iota<k\}\cup\{\{a^{j_c}_{i}:k\leq i\leq n+1\}\}\cup[A\setminus(\underline{c}\cup A_{j_c})]^1,\] where $[c_{\iota}]=\{c_{\iota}\}\cup\{a^{j_c}_{m}:m<k\text{ and }c_{m}=c_{\iota}\}$ for $\iota<k$.

    Define $f\colon  \seq_{\leq n}(A)\to\Part_{\fin}(A)$ by $f(c)=\pi_c$. To show that $f$ is  injective, let $b=\langle b_0,b_1,\dots,b_{k-1}\rangle,c=\langle c_0,c_1,\dots,c_{\ell-1}\rangle\in\seq_{\leq n}(A)$ such that $b\ne c$.

    \textbf{Case 1.} $j_b=j_c$.

    If there is $i<\min\{k,\ell\}$ such that $b_i\ne c_i$, then $a^{j_b}_i\in[b_i]\cap[c_i]$ but $[b_i]\ne[c_i]$, so $\pi_b\ne\pi_c$. Otherwise, $b$ and $c$ must be of different lengths. We may assume that $b$ is a proper sub-sequence of $c$, so $k<\ell$. Then $c_k\not\in\{a^{j_b}_i:k\leq i\leq n+1\}=[a^{j_b}_k]_{\pi_b}$ but $c_k\in[a^{j_b}_k]_{\pi_c}$, so $\pi_b\ne\pi_c$.

\smallskip

    \textbf{Case 2.} $j_b\ne j_c$.

     Then $[a^{j_b}_{n+1}]_{\pi_c}$ is a singleton or contains some $a\in A_{j_c}$ (when $a^{j_b}_{n+1}\in \underline{c}$). Since $[a^{j_b}_{n+1}]_{\pi_b}$ is a non-singleton subset of $A_{j_b}$ and $A_{j_b}\cap A_{j_c}=\emptyset$, $[a^{j_b}_{n+1}]_{\pi_c}\ne[a^{j_b}_{n+1}]_{\pi_b}$. Thus $\pi_b\ne\pi_c$.

    \medskip

    Hence $f$ is injective, so $|\seq_{\leq n}(A)|\leq|\Part_{\fin}(A)|$.

    Next, suppose to the contrary that $|\seq_{\leq n}(A)|=|\Part_{\fin}(A)|$. Then there is an injection $g\colon \Part_{\fin}(A)\rightarrow\seq_{\leq n}(A)$.

    We shall construct a one-to-one $\omega$-sequence of elements in $\seq_{\leq n}(A)$.

    Let $s_0=g(\{A_j:j\leq n+1\}\cup[A\setminus\bigcup_{j\leq n+1} A_j]^1)$ and $s_{k+1}=g(f (s_k))$ for all $k<\omega$. Since $\{A_j:j\leq n+1\}\cup[A\setminus\bigcup_{j\leq n+1} A_j]^1\not\in\ran(f )$ where $g$ and $f $ are injective, $\langle s_0,s_1,\dots\rangle_{\omega}$ is a one-to-one $\omega$-sequence of elements in $\seq_{\leq n}(A)$. Define $a_0=s_0(0)$ (or $a_0=s_1(0)$ if $s_0$ is the empty sequence) and $a_{k+1}=s_i(j)$ where $i$ is the first $l$ such that $\underline{s_l}\setminus \{a_m: m\leq k\}\neq\emptyset$ and $j$ is the first $p$ such that $s_i(p)\not\in\{a_m: m\leq k\}$. Thus $\langle a_0,a_1,\dots\rangle_\omega$ is a one-to-one $\omega$-sequence of elements in $A$, so $A$ is Dedekind infinite. This contradicts Theorem \ref{<}.
Hence, we can conclude that $|\seq_{\leq n}(A)|<|\Part_{\fin}(A)|$.
\end{proof}

\end{thm}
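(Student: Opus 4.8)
The plan is to establish the inequality in two stages: first to exhibit an explicit injection $f\colon\seq_{\le n}(A)\to\Part_{\fin}(A)$, which gives $|\seq_{\le n}(A)|\le|\Part_{\fin}(A)|$, and then to rule out equality by showing that any injection in the reverse direction forces $A$ to be Dedekind infinite, at which point Theorem~\ref{<} supplies a contradiction.

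For the injection, the difficulty is that a bounded finite sequence carries two kinds of data a partition does not obviously record: the ordered positions of its (possibly repeated) entries, and its length. I would fix $(n+2)^2$ auxiliary ``marker'' elements of $A$, arranged into $n+2$ pairwise disjoint groups $A_0,\dots,A_{n+1}$ each of size $n+2$. Given $c=\langle c_0,\dots,c_{k-1}\rangle$ with $k\le n$, the set $\underline{c}$ meets at most $n$ of the groups, so there is a least index $j_c$ with $A_{j_c}\cap\underline{c}=\emptyset$; this free group is used to encode $c$. I would merge each value $c_\iota$ with the markers $a^{j_c}_m$ at every position $m$ where that value recurs, so that equal entries collapse into one block while the markers tag their positions; a single leftover block $\{a^{j_c}_i:k\le i\le n+1\}$ then records the length $k$; and everything else is left as a singleton. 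Since $k\le n$, each block is finite, so $\pi_c$ is a genuine member of $\Part_{\fin}(A)$.

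Injectivity I would verify by the case split on whether $j_b=j_c$. When the free groups coincide, a first disagreement at a common position $i$ is witnessed by the marker $a^{j_b}_i$, which lies in different blocks of $\pi_b$ and $\pi_c$; a difference in length is witnessed by the marker at the shorter length, which sits in the leftover block on one side but in a value block on the other. When the free groups differ, the marker $a^{j_b}_{n+1}$ lies in a non-singleton subset of $A_{j_b}$ in $\pi_b$ (the leftover block is non-singleton because $k\le n$), whereas in $\pi_c$ it is either a singleton or pooled with an element of the disjoint group $A_{j_c}$; disjointness of the groups forces the two blocks apart.

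For strictness, suppose toward a contradiction that some injection $g\colon\Part_{\fin}(A)\to\seq_{\le n}(A)$ exists. Imitating the recursion of Theorem~\ref{<}, I would seed with a partition outside $\ran(f)$: the partition $\{A_j:j\le n+1\}\cup[A\setminus\bigcup_{j\le n+1}A_j]^1$ has $n+2$ non-singleton blocks, whereas every $\pi_c$ has at most $n+1$, so it avoids $\ran(f)$. Iterating $s_{k+1}=g(f(s_k))$ then yields, by injectivity of $f$ and $g$ and the choice of seed, a one-to-one $\omega$-sequence $\langle s_0,s_1,\dots\rangle_\omega$ in $\seq_{\le n}(A)$. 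Infinitely many distinct sequences of length at most $n$ cannot be built from a finite pool of entries, so reading off a previously unused entry at each stage produces a one-to-one $\omega$-sequence of elements of $A$; hence $A$ is Dedekind infinite, and Theorem~\ref{<} combined with $|\Part_{\fin}(A)|=|\seq_{\le n}(A)|\le|\seq(A)|$ yields $|\Part_{\fin}(A)|<|\Part_{\fin}(A)|$, the desired contradiction. The main obstacle throughout is the design of $f$: producing a finite, choice-free encoding that simultaneously captures entry positions, repetitions, and length, and then pushing the $j_b\ne j_c$ case of injectivity through.
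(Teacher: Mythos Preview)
Your proposal is correct and follows essentially the same approach as the paper: the same $(n+2)^2$-marker encoding with the choice of a free group $A_{j_c}$, the same two-case injectivity argument, and the same strictness step via the seed $\{A_j:j\le n+1\}\cup[A\setminus\bigcup_j A_j]^1$ and the iteration $s_{k+1}=g(f(s_k))$. Your explicit count of non-singleton blocks to justify that the seed avoids $\ran(f)$ is in fact a bit more detailed than the paper's bare assertion of this fact.
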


\section{CONSISTENCY RESULT AND SUMMARY}\label{sec3}

From a result in ZF, we know that \lq\lq $|\seq (X)|<|\Part_{\fin}(X)|$ for any Dedekind infinite set $X$\rq\rq.
Now, we show that the assumption that \lq\lq$X$ is Dedekind infinite\rq\rq\, cannot be removed.

We use
  permutation models which are models of ZFA, the Zermelo-Fraenkel set theory with atoms. Our relative consistency result can be transferred to ZF by the
Jech-Sochor First Embedding Theorem (cf. \cite[Theorem 6.1]{Jech}). First, we give a brief explanation of permutation models.

Let $A$ be a set of atoms and $\mathcal{G}$ be a group of permutations on $A$. Each $\pi\in\mathcal{G}$ is extended so that $\pi x=x$ for all pure sets $x$, i.e. sets whose transitive closure contain no atoms.
For a normal ideal $I$ of subsets of  $A$, a set $E\in I$ is a \emph{support} of $x$ if and only if $\fix(E) \subseteq \sym(x)$, where $\fix(E) = \{ \pi\in\mathcal{G} : \pi e = e \text{\ for all\ }e \in E \}$ and $\sym(x) = \{ \pi\in\mathcal{G} : \pi x = x \}$. Then we have a permutation model $\mathcal{V}$ induced by a normal ideal, defined hereditarily by
\[\mathcal{V}=\{x : x \text{\ has a support and\ } x\subseteq\mathcal{V} \}.\]
 The model we use here is the basic Fraenkel model  $\mathcal{V}_{F_0}$ which is a permutation model induced by the normal ideal $\fin (A)$ where $A$ is a denumerable set of atoms and $\mathcal{G}$ is the group of all permutations on $A$ (see \cite[Chapter 4]{Jech} for more details). First, we list some notation used in our work.

\begin{notat}
    \begin{enumerate}
    \item For any set $X$ and $E\in\fin(A)$, let $X_E=\{x\in X:\text{ $E$ is a support of $x$}\}$.
    \item Let $(a;b)$ denote the cyclic permutation of $A$ such that $a\mapsto b\mapsto a$ and fixes all elements in $A\setminus\{a,b\}$.
    \end{enumerate}
\end{notat}

\begin{thm}
    $\mathcal{V}_{F_0}\vDash |\seq^{1-1}(A)|\not\leq|\Part_{\fin}(A)|$.
\begin{proof}
     Suppose to the contrary that there is an injection $f\colon \seq^{1-1}(A)\to\Part_{\fin}(A)$ with a support $E$.
     We may assume that $E\neq\emptyset$.

     For any $\pi\in\fix(E)$ and any $x\in \seq^{1-1}(A)$, since $\pi(f(x))=(\pi f)(\pi x) = f(\pi x)$ where $f$ is injective,
\[\pi(f(x))=f(x) \leftrightarrow f(\pi x)=f(x) \leftrightarrow \pi x=x.\]
  Thus $f[\seq^{1-1}(A)_E]\subseteq \Part_{\fin}(A)_E$, so $|\seq^{1-1}(A)_E|\leq|\Part_{\fin}(A)_E|$.
     Since $\seq^{1-1}(E)\subseteq\seq^{1-1}(A)_E$, $|\seq^{1-1}(E)|\leq|\Part_{\fin}(A)_E|$.
     By Corollary \ref{A>B}, $|\seq^{1-1}(E)|>|\Part(E)|$. Thus, in order to get a contradiction, it remains to show that
     $|\Part(E)|\geq |\Part_{\fin}(A)_E|$.

     To show that $\Part_{\fin}(A)_E\subseteq\{Y\cup[A\setminus E]^1:Y\in\Part(E)\}$, let $\Pi\in\Part_{\fin}(A)_E$. Suppose $[a]_{\Pi}$ is not a singleton for some $a\in A\setminus E$. Then pick $b\in A\setminus (E\cup[a]_{\Pi})$ and let $\pi=(a;b)$. Thus $\pi$ fixes $E$ but $\pi(\Pi)\ne\Pi$ where as $E$ is a support of $\Pi$, a contradiction. Hence $[x]_{\Pi}=\{x\}$ for all $x\in A\setminus E$, i.e. $[A\setminus E]^1\subseteq\Pi$ as desired. Thus $|\Part_{\fin}(A)_E|\leq|\{Y\cup[A\setminus E]^1:Y\in\Part(E)\}|=|\Part(E)|$.
\end{proof}
\end{thm}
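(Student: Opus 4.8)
The plan is to argue by contradiction inside $\mathcal{V}_{F_0}$, exploiting the fact that every object of a permutation model carries a finite support. Suppose there were an injection $f\colon \seq^{1-1}(A)\to\Part_{\fin}(A)$ in the model, and fix a finite support $E$ of $f$; since supports are upward closed and $\fin(A)$ contains every finite subset of $A$, I may enlarge $E$ so that $E\neq\emptyset$. The first move is the standard support-transfer argument. For $\pi\in\fix(E)$ we have $\pi f=f$, hence $\pi(f(x))=f(\pi x)$ for every $x\in\seq^{1-1}(A)$; combining this with the injectivity of $f$ gives $\pi(f(x))=f(x)\leftrightarrow \pi x=x$. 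Consequently $f$ sends every $E$-supported injective sequence to an $E$-supported finite partition, i.e. $f[\seq^{1-1}(A)_E]\subseteq\Part_{\fin}(A)_E$, so $|\seq^{1-1}(A)_E|\le|\Part_{\fin}(A)_E|$.

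Next I would pin the two ends of the resulting chain to finite, purely combinatorial quantities. On the domain side, every injective sequence whose entries are drawn from $E$ is fixed by $\fix(E)$, so $\seq^{1-1}(E)\subseteq\seq^{1-1}(A)_E$ and therefore $|\seq^{1-1}(E)|\le|\Part_{\fin}(A)_E|$. On the range side I claim that an $E$-supported finite partition is rigid outside $E$: every atom $a\in A\setminus E$ must form its own block. The reason is that if $[a]_\Pi$ were not a singleton, I could choose a fresh atom $b\in A\setminus(E\cup[a]_\Pi)$ — possible since $E$ and the block $[a]_\Pi$ are finite while $A$ is infinite — and the transposition $(a;b)$ would fix $E$ pointwise yet move $\Pi$, contradicting that $E$ supports $\Pi$. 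Granting this, each $\Pi\in\Part_{\fin}(A)_E$ has the form $Y\cup[A\setminus E]^1$ with $Y\in\Part(E)$, which yields an injection $\Part_{\fin}(A)_E$ into $\Part(E)$ and hence $|\Part_{\fin}(A)_E|\le|\Part(E)|$.

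Stringing the inequalities together produces $|\seq^{1-1}(E)|\le|\Part_{\fin}(A)_E|\le|\Part(E)|$, which contradicts Corollary \ref{A>B}: since $E$ is a nonempty finite set, $|\seq^{1-1}(E)|>|\Part(E)|$. This contradiction shows that no such $f$ exists in $\mathcal{V}_{F_0}$, which is exactly the assertion $\mathcal{V}_{F_0}\vDash|\seq^{1-1}(A)|\not\leq|\Part_{\fin}(A)|$.

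I expect the genuine work to sit in the rigidity claim of the middle paragraph, namely verifying that a single transposition with a fresh atom really changes $\Pi$. One must check that after swapping $a$ with $b$, the image of the block $[a]_\Pi$ contains $b$ together with some element $c\neq a,b$ whose $\Pi$-block still avoids $b$, so that the swapped block cannot belong to $\Pi$; the finiteness of the blocks, built into the definition of $\Part_{\fin}$, is precisely what guarantees a fresh $b$ is available. Everything else is routine symmetry bookkeeping of permutation models together with the finite estimate already supplied by Corollary \ref{A>B}.
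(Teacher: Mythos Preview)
Your proposal is correct and follows essentially the same route as the paper's own proof: assume an injection $f$ with finite support $E\neq\emptyset$, use the support-transfer identity $\pi(f(x))=f(\pi x)$ together with injectivity to get $|\seq^{1-1}(E)|\le|\Part_{\fin}(A)_E|$, then show via a single transposition $(a;b)$ with a fresh $b$ that every $\Pi\in\Part_{\fin}(A)_E$ has the form $Y\cup[A\setminus E]^1$ for some $Y\in\Part(E)$, and conclude by the finite inequality of Corollary~\ref{A>B}. The only cosmetic difference is that you spell out a bit more explicitly why the transposition actually moves $\Pi$ and why a fresh $b$ exists.
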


In conclusion, we have shown in ZF that
\begin{enumerate}
\item for any Dedekind infinite set $X$, $|\seq (X)|<|\Part_{\fin}(X)|$ (Theorem \ref{<}).
\item for any infinite set $X$ and any natural number $n$, $|\seq_{\leq n}(X)|<|\Part_{\fin}(X)|$ (Theorem \ref{nleq}).
\end{enumerate}

 It follows from the above consistency result that the condition that \lq\lq$X$ is Dedekind infinite set\rq\rq\, in Theorem \ref{<}  cannot be removed. Actually, Theorem 3.1 tells us more than that. It implies that  even the statement \lq\lq $|\seq^{1-1} (X)|\leq|\Part_{\fin}(X)|$ for any infinite set $X$\rq\rq\, is not provable in ZF. However, we are still wondering whether the statement \lq\lq $|\seq^{1-1} (X)|\neq|\Part_{\fin}(X)|$ for any infinite set $X$\rq\rq\, is provable in ZF or not?  This is left open for future research.

\bibliographystyle{ieeetr}

\end{document}